\documentclass[letter, reqno, 12pt]{amsart}
\usepackage{amsmath,amsthm,amssymb}
\usepackage{comment}
\usepackage[margin=1in]{geometry}
\usepackage{float}
\usepackage[citestyle=alphabetic,bibstyle=alphabetic,maxbibnames=5]{biblatex}
\addbibresource{references.bib}
\usepackage{graphicx}

\newcommand{\R}{\mathbb{R}}
\newcommand{\E}{\mathbb{E}}

\newcommand{\Ai}{\text{Ai}}

\newtheorem{theorem}{Theorem}

\newtheorem{proposition}{Proposition}

\theoremstyle{definition}

\theoremstyle{remark}

\theoremstyle{definition}

\theoremstyle{definition}

\title{Bounded Bessel Processes and Ferrari-Spohn Diffusions}
\author{Matthew Lerner-Brecher}
\date{}

\begin{document} 
\begin{abstract} We introduce a new diffusion process which arises as the $n\to\infty$ limit of a Bessel process of dimension $d \ge 2$ conditioned upon remaining bounded below one until time $n$. In addition to being interesting in its own right, we argue that the resulting diffusion process is a natural hard edge counterpart to the Ferrari-Spohn diffusion of \cite{FS}. In particular, we show that the generator of our new diffusion has the same relation to the Sturm-Liouville problem for the Bessel operator that the Ferrari-Spohn diffusion does to the corresponding problem for the Airy operator.
\end{abstract}
\maketitle

\section{Introduction}
The Ferrari-Spohn diffusion is a diffusion process on the positive real line with infinitesimal generator
\[\frac{1}{2}\frac{d^2}{dx^2} + \left(\frac{d}{dx} 
\log\Ai(x - \omega_1)\right)\frac{d}{dx},\]
where $\Ai(x)$ denotes the Airy function and $-\omega_i$ is the $i$-th largest real root of $\Ai(x)$. While Ferrari-Spohn diffusions have far-reaching connections to the Ising and SOS models \cite{ISV, IVW, IOSV, FSh}, the original motivation for studying them came from an approximation to multilayer polynuclear growth given by Brownian bridges \cite{FS}. In particular, the authors of \cite{FS} looked at a collection of $n$ Brownian bridges $B_i:[0,T]\to\R$, $B_i(0) = B_i(T) = -i$, $1 \le i \le n$ conditioned on non-intersection. Under proper re-scaling as $n\to\infty$, the top-most Brownian bridges asymptotically approach a semicircle and their behavior at a fixed time is given by the Airy kernel. The idea of \cite{FS} was to observe how this process changes if one replaces the lower $n-1$ probabilistic curves with a single deterministic one, that is, a Brownian bridge conditioned to stay above a semicircle. While many large scale properties remain unchanged, at the local level this new process instead evolves according to the Ferrari-Spohn diffusion. This connection between Ferrari-Spohn diffusions and the Airy kernel has since been expounded on even further as recent papers \cite{FSh, DS} have shown that if one conditions $n$ Ferrari-Spohn diffusions on non-intersection (the so-called Dyson Ferrari-Spohn diffusion), then the $n \to \infty$ behavior of the top lines is given by the Airy line ensemble of \cite{CH}.
\par 
In this paper, we introduce a new diffusion process, which we argue is the natural analogy of the Ferrari-Spohn diffusion for the Bessel kernel. Like the Airy kernel, the Bessel kernel is a limiting correlation kernel that often emerges when studying edge limits of determinantal point processes. Typically, these kernels arise under similar conditions with the caveat that one expects the Airy kernel where the support is unbounded and the Bessel kernel where the support is constrained. A classical example of this is given by the singular values of a random square Gaussian matrix. As the dimension of the matrix goes to infinity, the limiting correlation kernel near the largest eigenvalues is the Airy kernel, whereas near the smallest eigenvalues, which are bounded below by 0, the same limit reveals the Bessel kernel (see e.g. \cite[Ch. 7]{For}).
\par 
To determine such a diffusion we looked towards a non-intersecting line ensemble presented in \cite{KMW}. Here, one studies $n$ Bessel bridges $Y_i:[0,T]\to\R_{\ge 0}$ of dimension $d$ satisfying $Y_i(0)=a, Y_i(T) = 0, 1 \le i \le n$ and again conditioned on non-intersection. For integer $d$, Bessel processes are equal to the magnitude of a $d$ dimensional Brownian motion, so this is a natural extension of the Brownian bridge model. As before, under appropriate re-scaling as $n\to\infty$, the bottom and top curves asymptotically approach deterministic curves. For some critical time $t^*$, the fixed time $t$ behavior of the bottom curves is described by the Airy kernel if $t < t^*$ and the Bessel kernel if $t > t^*$. Near the top curve, the fixed time behavior is given by the Airy kernel for any $t \in (0,T)$. While they do not prove this formally, the authors of \cite{FS} explain that local convergence to the Ferrari-Spohn diffusion should hold for a Brownian bridge conditioned to stay above any concave curve $g:[0,T]\to\R$. It is then reasonable to expect that if one performs the same procedure of conditioning a Bessel bridge above (resp. below for $t < t^*$) the limiting deterministic curve near the top (resp. bottom) of the ensemble, the Ferrari-Spohn diffusion will emerge.
\par 
Unfortunately, for $t > t^*$, the bottom deterministic curve becomes a flat line at zero, so it is meaningless to constrain a Bessel process below it. However, since the average distance of the bottom-most Bessel bridge from zero is constant and of order $n^{-1}$, a natural alternative is to condition a Bessel process on remaining below $n^{-1}$ over some fixed time interval. Scaling this up, one gets a Bessel process conditioned on staying below 1 until time of order $n$. The main result of this paper is that this remarkably simple process converges to a diffusion whose generator bears a strong connection to that of the Ferrari-Spohn diffusion.
\begin{theorem} 
\label{main_thm}
Let $Y$ be a Bessel process of dimension $d \ge 2$ with $Y_0 \in (0,1)$ fixed. Set $\tau = \inf \{t > 0 : Y_t = 1\}$ and define $X^{(n)}$ to be a stochastic process distributed according to $Y | \tau > n$ for $n \ge 0$. Then, in the sense of finite dimensional distributions, as $n\to\infty$ the process $X^{(n)}$ converges to a Feller process $X$ with state space $[0,1]$ and generator
\begin{equation} 
    \label{eq:bdd_Bessel_generator}
\mathcal{L} = \frac{1}{2}\frac{d^2}{dx^2} + \left(\frac{d}{dx} \log (x^{1/2} J_{\alpha}(j_{1,\alpha}x))\right)\frac{d}{dx}.
\end{equation}
Here $\alpha = \frac{d-2}{2}$, $J_{\alpha}(x)$ denotes the Bessel function of the first kind of order $\alpha$, and $0 < j_{1, \alpha} < j_{2,\alpha} < \cdots$ are the positive real zeros of $J_{\alpha}(x)$.
\end{theorem}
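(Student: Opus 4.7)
The natural approach is a Doob $h$-transform with respect to the ground state of the Bessel generator killed upon reaching $\{x=1\}$. Writing
\[L = \frac{1}{2}\frac{d^2}{dx^2} + \frac{2\alpha+1}{2x}\frac{d}{dx}, \qquad \alpha = \frac{d-2}{2},\]
for the infinitesimal generator of $Y$, I would first identify the principal eigenfunction $h$ of $L$ on $(0,1)$ with Dirichlet condition at $1$, then use spectral theory to pin down the large-$n$ asymptotics of $\Pro_x(\tau>n)$, and finally combine these with the Markov property to realize $X$ as the $h$-transform of the killed Bessel process; the generator $\mathcal{L}^h$ of that $h$-transform should match \eqref{eq:bdd_Bessel_generator}.

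Step 1 is to solve $Lh = -\frac{\lambda}{2}h$ on $(0,1)$ subject to regularity at $0$ (the appropriate condition since $0$ is unattainable for $d\ge 2$) and $h(1)=0$. The substitution $h(x) = x^{-\alpha} u(x)$ reduces the equation to the standard Bessel equation $x^2 u'' + x u' + (\lambda x^2 - \alpha^2)u = 0$, forcing $u(x) = J_\alpha(j_{k,\alpha} x)$ and $\lambda = j_{k,\alpha}^2$ for $k \ge 1$. The ground state is therefore $h(x) = x^{-\alpha} J_\alpha(j_{1,\alpha} x)$ with eigenvalue $\lambda_1 = j_{1,\alpha}^2$, and the asymptotic $J_\alpha(z) \sim (z/2)^\alpha/\Gamma(\alpha+1)$ as $z \to 0$ shows that $h$ extends to a continuous, strictly positive function on $[0,1)$ with $h(1)=0$.

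Step 2 is an eigenfunction expansion with respect to the speed measure $x^{2\alpha+1}\,dx$:
\[\Pro_x(\tau > n) = \sum_{k\ge 1} c_k \phi_k(x)\, e^{-\lambda_k n/2},\]
where $\phi_k$ is the $L^2$-normalized version of $x^{-\alpha} J_\alpha(j_{k,\alpha}x)$ and $c_k = \int_0^1 \phi_k(y)\, y^{2\alpha+1}\,dy$; a short Bessel-identity computation shows $c_1 > 0$. Plugging the resulting asymptotic $\Pro_x(\tau > n) \sim c_1 \phi_1(x) e^{-\lambda_1 n/2}$ into the Markov-property identity
\[\E_x\bigl[f(Y_{t_1},\ldots,Y_{t_m})\,\big|\,\tau > n\bigr] = \frac{\E_x\bigl[f(Y_{t_1},\ldots,Y_{t_m})\, \mathbf{1}_{\tau > t_m}\, \Pro_{Y_{t_m}}(\tau > n - t_m)\bigr]}{\Pro_x(\tau > n)}\]
and passing to the limit (via dominated convergence, using a uniform bound $\Pro_y(\tau > n) \le C\, h(y) e^{-\lambda_1 n/2}$) yields exactly the finite-dimensional distributions of the Doob $h$-transform of the killed Bessel process. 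A direct computation then gives $\mathcal{L}^h = \frac{1}{2}\frac{d^2}{dx^2} + \bigl(\frac{2\alpha+1}{2x} + \frac{h'(x)}{h(x)}\bigr)\frac{d}{dx}$, and the identity $\frac{2\alpha+1}{2x} - \frac{\alpha}{x} = \frac{1}{2x}$ shows that $\mathcal{L}^h$ agrees with \eqref{eq:bdd_Bessel_generator}. The Feller property follows from smoothness of the coefficients on $(0,1)$, the entrance-type behavior inherited at $0$ (the leading singularity of the drift there remains $\frac{1}{2x}$, i.e. Bessel of dimension $2$), and the strong inward drift $h'/h \to -\infty$ as $x \uparrow 1$, which prevents the process from attaining $1$.

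The main obstacle is the uniform version of the spectral asymptotic in Step 2: on a compact interval with two regular endpoints the eigenfunction expansion is textbook, but here $0$ is a singular endpoint of $L$, so one must either appeal to the theory of singular Sturm-Liouville problems or, more concretely, write the killed transition density explicitly in terms of $J_\alpha$ and estimate the tail of the resulting series. The dominated-convergence step in particular requires quantitative control on the decay of higher eigenfunctions relative to $h$, and this is where the bulk of the real work will lie.
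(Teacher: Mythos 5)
Your overall strategy coincides with the paper's: identify the ground state $h(x)=x^{-\alpha}J_\alpha(j_{1,\alpha}x)$ of the killed Bessel generator, extract the leading exponential asymptotic of $\mathbb{P}^x(\tau>n)$ from an eigenfunction expansion, and realize the limit as the Doob $h$-transform of the killed process. Where you appeal to singular Sturm--Liouville theory, the paper works concretely: it applies Dynkin's formula with the test functions $h_i(x)=x^{-\alpha}J_\alpha(j_{i,\alpha}x)$ (which satisfy $\mathcal{L}_0 h_i=-\tfrac{j_{i,\alpha}^2}{2}h_i$) to compute the Fourier--Bessel coefficients of the killed transition density in closed form, then invokes the classical Fourier--Bessel expansion. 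This turns the ``uniform version of the spectral asymptotic'' you flagged as the main obstacle into an elementary tail estimate on a series whose terms decay like $e^{-j_{i,\alpha}^2 t/2}$, and it avoids any appeal to abstract spectral theory at the singular endpoint $0$.

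The genuine gap is in your treatment of the Feller property, which you assert rather than prove. You claim it follows from smooth coefficients on $(0,1)$, entrance behavior at $0$, and the blowup $h'/h\to-\infty$ at $1$; but unattainability of the boundary does not by itself give strong continuity of $Q_t$ on $C[0,1]$, and the drift singularity you cite is precisely the threat, since a priori a process started near $1$ could be pushed a macroscopic distance in vanishing time. The paper proves $\|Q_tf-f\|_\infty\to 0$ directly: Dynkin's formula gives $|Q_tf(x)-f(x)|\lesssim \E^x[\tau\wedge t]/h_1(x)$ for $f$ in the core $\mathcal{A}=\{f\in C^2[0,1]:f'(0)=f'(1)=0\}$, a coupling of $Y$ with its driving Brownian motion yields $\E^x[\tau\wedge t]=O((1-x)\sqrt t)$ uniformly as $x\to 1$, and the factor $(1-x)$ cancels against the simple zero of $h_1$ at $1$; one then passes to all of $C[0,1]$ by density of $\mathcal{A}$ and contractivity of $Q_t$. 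You would need to reproduce such an estimate, or else carefully verify the hypotheses of a precise boundary-classification criterion for the Feller property of one-dimensional diffusions --- the heuristic as written is not a proof. A smaller related omission: the identification of the generator must be carried out on a domain where the boundary terms in Dynkin's formula vanish; the paper again uses the core $\mathcal{A}$, and this needs to be made explicit in your Step 2 as well.
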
 
The function $J_{\alpha}(j_{1,\alpha}x)$ is a natural counterpart to $\Ai(x - \omega_1)$. Indeed, these are solutions to the eigenvalue (equiv. Sturm-Liouville) problem $Dy = \lambda y$ corresponding to differential operators defining the Bessel, Airy functions respectively. The presence of $x^{1/2}$ can be explained by orthogonality relations satisfied by solutions to these problems. The functions $\{J_{\alpha}(j_{i,\alpha}x)\}_{i=1}^{\infty}$ are orthogonal with respect to the measure $xdx$ on $[0,1]$, whereas $\{\Ai(x-\omega_i)\}_{i=1}^{\infty}$ are orthogonal with respect to $dx$ on $[0,\infty)$. Interestingly, these same functions appear when studying the classical random matrix ensembles at zero temperature. At the soft edge \cite{AHV, GK}, the limiting behavior of the eigenvalues is described by Gaussians whose covariances are expressible in terms of $\Ai(x - \omega_i)$. At the hard edge \cite{And21, LB}, one instead obtains functions of $J_{\alpha}(j_{i,\alpha}x)$ again re-scaled to have orthogonality measure $dx$.
\par 
The proof of our main result will be spread across two sections. In Section 2, we will derive explicit forms for the limiting distribution of $X^{(n)}_t$ as $n\to\infty$. In Section 3, we will then prove that this limiting distribution defines a Feller semigroup with the desired generator. Between this and a brief argument showing $X^{(n)}$ becomes Markovian in the limit, the result will follow. In both sections, our primary tools will be Dynkin's theorem and a Fourier-like expansion involving the functions $\{J_{\alpha}(j_{i,\alpha}x)\}_{i=1}^{\infty}$. To our knowledge, such expansions are the only way of gaining access to an explicit formula for the joint density of a Bessel process and the time it takes it to reach 1.

\section{Transition Probabilities}
We begin by introducing Bessel processes, Bessel functions, and the aforementioned Fourier-like expansion more formally. The Bessel process of dimension $d$ is a diffusion on $[0,\infty)$ given by the solution to the SDE 
\begin{equation} 
\label{eq:Bessel_SDE} dY_t = dB_t + \frac{d-1}{2Y_t}dt,
\end{equation}
where $B$ is a one-dimensional Brownian motion. In the relevant case $d \ge 2$, $Y_t$ is Feller and has generator
\begin{equation} \label{eq:L0}
\mathcal{L}_0 := \frac{1}{2}\frac{d^2}{dx^2} + \frac{d-1}{2x}\frac{d}{dx}
\end{equation}
whose domain contains all bounded $C^2[0,\infty)$ functions. For facts about Bessel processes pertaining to the generator see \cite{BS}; for all other properties see \cite[Ch. XI]{RY}.
\par
The Bessel function of the first kind $J_{\alpha}(x)$ is a solution to the differential equation 
\begin{equation} 
\label{eq:J_de}
\frac{d^2y}{dx^2} + \frac{1}{x} \frac{dy}{dx} + \left(1 - \frac{\alpha^2}{x^2}\right)y = 0
\end{equation}
that satisfies
\begin{equation} \label{eq:J_Taylor_series} J_{\alpha}(x) = \sum_{k=0}^{\infty} (-1)^k \frac{(x/2)^{2k+\alpha}}{k!\Gamma(k+\alpha+1)}.
\end{equation}
For $\alpha > -1$, $J_{\alpha}(x)$ has an infinite number of positive real zeros $j_{1,\alpha} < j_{2,\alpha} < \cdots$ all of which are simple. These zeros play an important role as one can obtain Fourier-like expansions of a large class of functions in terms of the basis $\{J_{\alpha}(j_{i,\alpha}x)\}_{i=1}^{\infty}$. Indeed, it is a classical result of Sturm-Liouville theory that for any $f(x)$ with $x^{1/2}f(x)$ integrable on $(0,1)$ and for any $\alpha > -\frac{1}{2}$ the equality
\begin{equation} 
\label{eq:Fourier_Bessel_Expansion}
f(x) = 2\sum_{i=1}^{\infty} \frac{J_{\alpha}(j_{i,\alpha}x)}{J_{\alpha+1}(j_{i,\alpha})^2} \int_0^1 f(y)J_{\alpha}(j_{i,\alpha}y)ydy
\end{equation}
holds under the same convergence criteria as the usual Fourier expansion. This expansion is often referred to as the Fourier-Bessel series for $f(x)$ and it will hold the key to proving the following proposition regarding the limiting transition probabilities of a bounded Bessel processes. For a comprehensive introduction to Bessel functions, their zeros, and Fourier-Bessel series we refer the reader to \cite{Wat}.

\begin{proposition} \label{prop:transition_probabilities} Let $Y$ be stochastic process and let $(\mathbb{P}^x)_{x \in (0,1)}$ be a collection of probability measures such that under $\mathbb{P}^x$, $Y$ is a Bessel process of dimension $d$ with $Y_0 = x$ almost surely. Additionally, define $X^{(n)}, \tau, \alpha$ as in Theorem \ref{main_thm}. Then, for any $t > 0$,
\begin{equation} \label{eq:transition_prob_bdd_bessel} \lim_{n\to\infty} \mathbb{P}^x(X^{(n)}_t \in dy) = \mathbb{P}^x(Y_t \in dy, \tau > t) \exp\left(\frac{j_{1,\alpha}^2}{2}t\right)\frac{y^{-\alpha}J_{\alpha}(j_{1,\alpha} y)}{x^{-\alpha}J_{\alpha}(j_{1,\alpha} x)},
\end{equation}
where the right-hand side may be further decomposed via the identity
\begin{equation} 
\label{eq:transition_prob_killed_Bessel}
\mathbb{P}^x(Y_t \in dy, \tau > t) = 2\frac{y^{\alpha+1}}{x^{\alpha}}\sum_{i=1}^{\infty}  \frac{J_{\alpha}(j_{i,\alpha} x)J_{\alpha}(j_{i,\alpha} y)}{J_{\alpha+1}(j_{i,\alpha})^2} \exp\left(-\frac{j_{i,\alpha}^2}{2}t\right)dy.
\end{equation}
\end{proposition}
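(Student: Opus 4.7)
The plan is to establish the spectral expansion (\ref{eq:transition_prob_killed_Bessel}) first and then deduce the conditioning limit (\ref{eq:transition_prob_bdd_bessel}) from it, with Dynkin's formula and the Fourier-Bessel expansion (\ref{eq:Fourier_Bessel_Expansion}) as the two main tools.

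For (\ref{eq:transition_prob_killed_Bessel}), let $\phi_i(x) := x^{-\alpha}J_\alpha(j_{i,\alpha}x)$. Using (\ref{eq:J_Taylor_series}) to control $\phi_i$ near $0$ and the standard bound $J_\alpha(y)=O(y^{-1/2})$ at infinity, $\phi_i$ is bounded and $C^\infty$ on $[0,\infty)$, hence in the domain of $\mathcal{L}_0$; substituting into (\ref{eq:J_de}) then gives $\mathcal{L}_0\phi_i = -\tfrac{1}{2}j_{i,\alpha}^2\phi_i$, and by construction $\phi_i(1)=0$. Therefore $e^{j_{i,\alpha}^2 s/2}\phi_i(Y_s)$ is a martingale, and optionally stopping it at the bounded time $t\wedge\tau$ while using $\phi_i(1)=0$ to kill the $\{\tau\le t\}$ contribution yields
\begin{equation*}
\mathbb{E}^x[\phi_i(Y_t)\mathbf{1}_{\tau>t}] = e^{-j_{i,\alpha}^2 t/2}\phi_i(x). \qquad (\star)
\end{equation*}
The unrestricted Bessel density is $O(y^{2\alpha+1})$ as $y\to 0^+$, so the killed law $\mathbb{P}^x(Y_t\in dy,\tau>t)$ has a density $p(y)$ of the same order near $0$; setting $h(y):=y^{-\alpha-1}p(y)$ then gives $y^{1/2}h(y)\in L^1(0,1)$, which is the hypothesis of (\ref{eq:Fourier_Bessel_Expansion}). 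The $i$th Fourier-Bessel coefficient of $h$ equals $\int_0^1 \phi_i(y)p(y)\,dy = \mathbb{E}^x[\phi_i(Y_t)\mathbf{1}_{\tau>t}] \stackrel{(\star)}{=} e^{-j_{i,\alpha}^2 t/2}\phi_i(x)$, and multiplying the resulting expansion of $h$ back through by $y^{\alpha+1}$ recovers (\ref{eq:transition_prob_killed_Bessel}).

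For (\ref{eq:transition_prob_bdd_bessel}), the definition of conditional probability and the Markov property of $Y$ at time $t$ yield
\begin{equation*}
\mathbb{P}^x(X^{(n)}_t\in dy) = \mathbb{P}^x(Y_t\in dy,\tau>t)\cdot\frac{\mathbb{P}^y(\tau>n-t)}{\mathbb{P}^x(\tau>n)}.
\end{equation*}
Integrating (\ref{eq:transition_prob_killed_Bessel}) in $y$ over $[0,1]$ using $\int_0^1 y^{\alpha+1}J_\alpha(j_{i,\alpha}y)\,dy = J_{\alpha+1}(j_{i,\alpha})/j_{i,\alpha}$, itself a one-line consequence of $\tfrac{d}{dy}[y^{\alpha+1}J_{\alpha+1}(cy)]=cy^{\alpha+1}J_\alpha(cy)$, gives the series
\begin{equation*}
\mathbb{P}^x(\tau>T) = \frac{2}{x^\alpha}\sum_{i=1}^\infty \frac{J_\alpha(j_{i,\alpha}x)}{j_{i,\alpha}J_{\alpha+1}(j_{i,\alpha})}\,e^{-j_{i,\alpha}^2 T/2}.
\end{equation*}
Because $j_{1,\alpha}<j_{2,\alpha}$, the $i=1$ term strictly dominates as $T\to\infty$; factoring it out of both the numerator and denominator in the displayed ratio leaves exactly the advertised prefactor $e^{j_{1,\alpha}^2 t/2}(y^{-\alpha}J_\alpha(j_{1,\alpha}y))/(x^{-\alpha}J_\alpha(j_{1,\alpha}x))$ in the $n\to\infty$ limit.

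The main technical challenge is handling the analytic hypotheses cleanly: for Part 1 one must verify both the integrability and the pointwise convergence criterion of (\ref{eq:Fourier_Bessel_Expansion}) for $h$, while for Part 2 one must dominate the tail $\sum_{i\ge 2}$ uniformly so that it is genuinely negligible compared to the leading term. Both tasks come down to the strong exponential decay $e^{-j_{i,\alpha}^2 T/2}$ combined with the classical asymptotics $j_{i,\alpha}\sim\pi i$ and $J_{\alpha+1}(j_{i,\alpha})\asymp i^{-1/2}$, together with bounds on $p(y)$ near the endpoints; once properly organized the estimates are routine, but they must be set up carefully.
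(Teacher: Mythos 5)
Your proposal is correct and follows essentially the same route as the paper: first establish $\mathbb{E}^x[h_i(Y_{t\wedge\tau})]=e^{-j_{i,\alpha}^2 t/2}h_i(x)$ (you via optional stopping of the exponential martingale, the paper via Dynkin's formula and solving the resulting integral equation — an equivalent argument), then apply the Fourier--Bessel expansion \eqref{eq:Fourier_Bessel_Expansion} to the killed density, and finally use Bayes/strong Markov together with dominance of the $i=1$ term in the hitting-time series \eqref{eq:hitting_time_dist}. The only cosmetic difference is that the paper cites \cite{BS} for \eqref{eq:hitting_time_dist} as an alternative to deriving it by integrating \eqref{eq:transition_prob_killed_Bessel} in $y$, which is what you do.
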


\begin{proof}
We begin by proving the latter identity. Let $h_i(x) = x^{-\alpha} J_{\alpha}(j_{i,\alpha}x)$. By  \eqref{eq:J_Taylor_series}, $h_i \in C^2[0,\infty)$ and thus we may apply Dynkin's formula using the generator $\mathcal{L}_0$ for $Y_t$:
\[
\E^x[h_i(Y_{t \wedge \tau})] = h_i(x) + \E^x\left[\int_0^{t \wedge \tau} (\mathcal{L}_0h_i)(Y_s)ds\right].
\]
From the differential equation \eqref{eq:J_de} defining $J_{\alpha}(x)$, we obtain $\mathcal{L}_0 h_i = -\frac{j_{i,\alpha}^2}{2}h_i$. Hence
\begin{align*} 
\E^x[h_i(Y_{t \wedge \tau})] &= h_i(x) - \frac{j_{i,\alpha}^2}{2} \E^x\left[\int_0^{t \wedge \tau} h_i(Y_s)ds\right] \\
&= h_i(x) - \frac{j_{i,\alpha}^2}{2} \E^x\left[\int_0^{t} h_i(Y_{s \wedge \tau})ds\right] \\
&= h_i(x) - \frac{j_{i,\alpha}^2}{2} \int_0^{t} \E^x\left[h_i(Y_{s \wedge \tau})\right]ds,
\end{align*}
where we use $h_i(1) = 0$ in the second line and swap the the integral using the boundedness of $h_i(x)$ on $[0,1]$ in the third line. Solving this yields
\begin{equation} \label{eq:Bessel-Fourier_coeffs} \int_0^1 h_i(y)\mathbb{P}^x(Y_t \in dy, \tau > t) = \E^x[h_i(Y_{t\wedge \tau})] = \exp\left(-\frac{j_{i,\alpha}^2}{2}t\right)h_i(x).
\end{equation}
Finally, by applying the Fourier-Bessel expansion \eqref{eq:Fourier_Bessel_Expansion} to 
\[f(y)dy = y^{-\alpha-1} \mathbb{P}^x(Y_t \in dy, \tau > t)\]
we obtain \eqref{eq:transition_prob_killed_Bessel}. The function $y^{1/2} f(y)$ can be seen to be integrable via the upper bound 
\[ y^{-\alpha-\frac{1}{2}} \mathbb{P}^x(Y_t \in dy) = \frac{y^{\frac{1}{2}}}{tx^{\alpha}}\exp\left(-\frac{x^2 + y^2}{2t}\right)I_{\alpha}\left(\frac{xy}{t}\right)dy, \]
holding for $\alpha \ge 0$. Here $I_{\alpha}(x) = i^{-\alpha}J_{\alpha}(ix)$ is the \textit{modified Bessel function of the first kind}. Additionally, the right-hand side of \eqref{eq:transition_prob_killed_Bessel} converges absolutely as $j_{i,\alpha}$ grows on the order of $i$, $J_{\alpha}(x)$ is upper bounded by a polynomial in $x$, and $|J_{\alpha+1}(j_{i,\alpha})|$ can be satisfactorily lower bounded via the identity
\[\frac{1}{2}J_{\alpha+1}(j_{i,\alpha})^2 = \int_0^1 J_{\alpha}(j_{i,\alpha}x)^2 dx\]
following from the Fourier-Bessel expansion \eqref{eq:Fourier_Bessel_Expansion}.
\par
To prove \eqref{eq:transition_prob_bdd_bessel}, we begin with Bayes' rule:
\[\mathbb{P}^x(Y_t \in dy | \tau > n) = \frac{\mathbb{P}^x(Y_t \in dy) \mathbb{P}^x(\tau > n | Y_t = y)}{\mathbb{P}^x(\tau > n)}.\]
By the Strong Markov property
\[ \mathbb{P}^x(\tau > n | Y_t = y) = \mathbb{P}^x(\tau >t|Y_t=y)\mathbb{P}^y(\tau > n-t) \]
whenever $n>t$. Plugging this back into Bayes' rule yields
\begin{equation} \label{eq:Bayes_rule}
    \mathbb{P}^x(Y_t \in dy | \tau > n) = \mathbb{P}^x(\tau > t, Y_t \in dy)\frac{\mathbb{P}^y(\tau > n-t)}{\mathbb{P}^x(\tau > n)}
\end{equation}
To compute this we use the following identity, which may be found in \cite{BS} or can be derived from \eqref{eq:transition_prob_killed_Bessel} by integrating over $y$ from 0 to 1:
\begin{equation}
    \label{eq:hitting_time_dist}
    \mathbb{P}^x(\tau > t) = 2x^{-\alpha}\sum_{k=1}^{\infty} \frac{J_{\alpha}(j_{k,\alpha}x)}{j_{k,\alpha}J_{\alpha+1}(j_{k,\alpha})} \exp\left(-\frac{j_{k,\alpha}^2}{2} t\right).
\end{equation}
Indeed,
\[\lim_{n\to\infty} \frac{\mathbb{P}^y(\tau > n-t)}{\mathbb{P}^x(\tau > n)} = \lim_{n\to\infty} \displaystyle\frac{2y^{-\alpha}\sum_{k=1}^{\infty} \frac{J_{\alpha}(j_{k,\alpha}y)}{j_{k,\alpha}J_{\alpha+1}(j_{k,\alpha})} \exp\left(-\frac{j_{k,\alpha}^2}{2}(n-t)\right)}{2x^{-\alpha}\sum_{k=1}^{\infty} \frac{J_{\alpha}(j_{k,\alpha}x)}{j_{k,\alpha}J_{\alpha+1}(j_{k,\alpha})} \exp\left(-\frac{j_{k,\alpha}^2}{2}n\right)} \]
In the limit $n \to\infty$, the $k = 1$ term will dominate the numerator and denominator yielding:
\[\lim_{n\to\infty} \frac{\mathbb{P}^y(\tau > n-t)}{\mathbb{P}^x(\tau > n)} = \frac{y^{-\alpha}J_{\alpha}(j_{1,\alpha}y)}{x^{-\alpha}J_{\alpha}(j_{1,\alpha}x)} \exp\left(\frac{j_{1,\alpha}^2}{2}t\right)
\]
Combining this with our application of Bayes' rule \eqref{eq:Bayes_rule} completes the proof.
\end{proof}

\section{Feller Property and Generator}
Define $h_i(x), \tau$ as in Proposition \ref{prop:transition_probabilities} and set $Z_t$ to be the process $Y_t$ killed at time $\tau$. The density of the limiting transition probabilities \eqref{eq:transition_prob_bdd_bessel} has a simple interpretation as the Doob's $h$-transform of $Z_t$ by $h=h_1$ rescaled by an exponential factor. This factor is present as $h_1$ is an eigenvalue of $\mathcal{L}_0$ instead being harmonic with respect to it. As the Doob's $h$-transform of a generator $\mathcal{G}$ is given by $f(x) \mapsto h(x)^{-1}(\mathcal{G}hf)(x)$, one would then expect that \eqref{eq:transition_prob_bdd_bessel} defines a semigroup with generator 
\begin{equation} \label{eq:Doob_h_transform}
(\mathcal{L}f)(x) = \frac{1}{h(x)} \left(\mathcal{L}_0 + \frac{j_{1,\alpha}^2}{2}\right) h(x)f(x)
\end{equation}
and indeed this $\mathcal{L}$ equals the one from the statement of Theorem \ref{main_thm}. The majority of the following proof will devoted to showing this idea more formally and confirming that \eqref{eq:transition_prob_bdd_bessel} does indeed define a Feller semigroup.
\begin{proof}[Proof of Theorem \ref{main_thm}]
Set $Q_t(x,y)dy$ to be the right-hand side of \eqref{eq:transition_prob_bdd_bessel} and 
\[R_t(x,y)dy = \mathbb{P}^x(Y_t \in dy, \tau > t).\]
Our argument will be separated into three parts: First, we will check that $Q_t$ indeed defines a Feller semigroup; following that, we will show $Q_t$ has generator $\mathcal{L}$; and then finally we complete the proof by showing convergence in the sense of finite dimensional distributions.
\par 
Before we even can confirm the Feller properties, however, we need to define $Q_t(x,y)$ for $x = 0,1$. Combining our two equations from Proposition \ref{prop:transition_probabilities}, we get
\begin{equation} 
\label{eq:Q_full_transition_prob}
Q_t(x,y) = 2yJ_{\alpha}(j_{1,\alpha}y) \sum_{i=1}^{\infty} \frac{J_{\alpha}(j_{i,\alpha}y)}{J_{\alpha+1}(j_{i,\alpha})^2} \frac{h_{i}(x)}{h_1(x)}\exp\left(\frac{j_{1,\alpha}^2 - j_{i,\alpha}^2}{2}t\right).
\end{equation}
Recall that $h_i(x)$ is continuous on the interval $[0,1]$ and polynomially bounded in $i$. As $h_i(0) \neq 0$ and 1 is a simple root of $h_i$ for all $i$, $\frac{h_i(x)}{h_1(x)}$ can be extended to a continuous function on $[0,1]$ bounded polynomially in $i$. Thus we can extend $Q_t(x,y)$ to $x = 0,1$ by continuity. In fact, the absolute convergence of \eqref{eq:Q_full_transition_prob} shows the map $(t,x,y)\mapsto Q_t(x,y)$ is continuous on $(0,\infty) \times [0,1] \times [0,1]$. This argument also has the side effect of proving the first Feller property $Q_t: C[0,1]\to C[0,1]$.
\par 
Next, it is immediate from \eqref{eq:Bessel-Fourier_coeffs} applied with $i =1$ that $Q_t(x,y)dy$ integrates to 1 and thus is a probability measure. Furthermore for $x,y \not\in \{0,1\}$, the following computation shows $Q_t$ satisfies the Chapman-Kolmogorov equation:
\begin{align*} (Q_{t}Q_s)(x,y)dy
&= \exp\left(\frac{j_{1,\alpha}^2(t+s)}{2}\right)\frac{h_1(y)}{h_1(x)} \int_0^1 R_t(x,z)R_s(z,y)dzdy\\
&= \exp\left(\frac{j_{1,\alpha}^2(t+s)}{2}\right)\frac{h_1(y)}{h_1(x)} \int_0^1 R_t(x,z)\mathbb{P}^x(Y_{t+s} \in dy, \tau > t+s | \tau > t, Y_t = z)dz\\
&= \exp\left(\frac{j_{1,\alpha}^2(t+s)}{2}\right)\frac{h_1(y)}{h_1(x)} \int_0^1 \mathbb{P}^x(Y_{t+s} \in dy, Y_t \in dz, \tau > t+s)\\
&= \exp\left(\frac{j_{1,\alpha}^2(t+s)}{2}\right)\frac{h_1(y)}{h_1(x)} R_{t+s}(x,y)dy\\
&= Q_{t+s}(x,y)dy,
\end{align*}
where we applied the Strong Markov property in the second line. Extending this argument to $x,y \in \{0,1\}$ via continuity shows that $\{Q_t\}_{t \ge 0}$ defines a transition semigroup. To prove this transition semigroup is Feller it now suffices to show $||Q_tf - f|| \to 0$ as $t \to 0$ for all $f \in C[0,1]$. Note 
\begin{equation} \label{eq:Qtf_ev}
Q_tf(x) = \frac{1}{h(x)}\E^x[h_1(Y_{t\wedge \tau})f(Y_{t\wedge \tau})]\exp\left(\frac{j_{1,\alpha^2}}{2}t\right).
\end{equation}
The contribution from the exponential term to $||Q_tf-f||$ is negligible as $t\to 0$ so we will omit it in the following computations. To deal with the remaining terms, it is convenient to work with a subset of $C[0,1]$ to start. Define
\[\mathcal{A} := \{f \in C^2[0,1]  \ : \ f'(0) = f'(1) = 0\}.\]
The condition $f'(1)=0$ is not necessary for the following argument, but we will also employ $\mathcal{A}$ when showing $Q_t$ has generator $\mathcal{L}$ where it will prove useful. For $f \in \mathcal{A}$ Dynkin's formula implies
\begin{equation} \label{eq:Dynkin_Feller_Formula}  \frac{1}{h_1(x)}\mathbb{E}^x[h_1(Y_{t\wedge \tau})f(Y_{t\wedge \tau})] = f(x) + \frac{1}{h_1(x)}\mathbb{E}^x\left[\int_0^{t \wedge \tau} \mathcal{L}_0(h_1f)(Y_s)ds\right]. \end{equation}
Note 
\begin{equation} \label{eq:L0hf}
\mathcal{L}_0(h_1f)(x) = \frac{1}{2}h_1(x)f''(x) + \left(\frac{h_1(x)}{2x} + j_{1,\alpha}x^{-\alpha}J_{\alpha}'(j_{1,\alpha}x)\right)f'(x) - \frac{j_{1,\alpha^2}}{2}h_1(x)f(x) 
\end{equation}
is bounded on $[0,1]$ and thus there exists a constant $C$ such that
\begin{equation} \label{eq:Qtf_expectation_bound}
\left|\frac{1}{h_1(x)}\mathbb{E}^x[h_1(Y_{t\wedge \tau})f(Y_{t\wedge \tau})] - f(x)\right| \le \frac{C}{h_1(x)}\E^x[\tau\wedge t].
\end{equation}
For any $\epsilon > 0$, crudely bounding the expectation by $t$ is enough to deduce $Q_tf(x) \to f(x)$ uniformly for $x \in [0,1-\epsilon]$. To improve this bound near $x=1$, observe that the SDE \eqref{eq:Bessel_SDE} defining a Bessel process allows one to couple a Brownian motion $B_t$ with our Bessel process $Y_t$ in such a way that $B_0=Y_0=x$ and $Y_t \ge B_t$. If $T_{1-x}$ is the time it takes $B_t$ to hit 1, then $T_{1-x} \ge \tau$. The random variable $T_{1-x}$ is well-studied and has density (see e.g. \cite{BS})
\[\frac{1-x}{\sqrt{2\pi s^3}}\exp\left(-\frac{(1-x)^2}{2s}\right)\]
from which one gets
\[\frac{C}{h_1(x)} \E^x[\tau \wedge t] \le \frac{C}{h_1(x)} \E^x[T_{1-x} \wedge t] = \frac{C}{h_1(x)}O((1-x)\sqrt{t}),\]
where the bound holds uniformly as $t\to 0,x\to 1$.
As $h_1(x)$ has a simple root at $x = 1$, $||Q_tf - f|| \to 0$ follows for $f \in \mathcal{A}$ from \eqref{eq:Qtf_expectation_bound}. For general $g \in C[0,1]$, the Stone-Weierstrass theorem implies that $\mathcal{A}$ is dense in $C[0,1]$. Therefore, for $\epsilon > 0$, we can select $f \in \mathcal{A}$ with $||f-g|| < \epsilon$. One then gets
\[||Q_tg-g|| \le ||Q_tf-f|| + ||f-g|| + |||Q_tf-Q_tg|| \le ||Q_tf-f|| + 2\epsilon,\]
where we use the fact that $Q_t$ is contractive. Taking $\epsilon \to 0$ completes the proof that $Q_t$ is Feller.
\par 
We now move on to showing that $Q_t$ has generator $\mathcal{L}$. Take $f \in \mathcal{A}$ as before. Substituting our formulas \eqref{eq:Doob_h_transform}, \eqref{eq:Qtf_ev} for $\mathcal{L}$ and $Q_tf$ into 
 Dynkin's formula \eqref{eq:Dynkin_Feller_Formula}:
\begin{align*}
Q_tf(x) &= \exp\left(\frac{j_{1,\alpha}^2}{2}\right)\left(f(x) + \frac{1}{h_1(x)}\E^x\left[\int_0^{t\wedge \tau} h_1(Y_s)\left(\mathcal{L}f(Y_s)-\frac{j_{1,\alpha}^2}{2}f(Y_s)\right) ds\right]\right). \\
\intertext{Applying $f'(0)=f'(1)=0$ to the formula for $\mathcal{L}_0$ \eqref{eq:L0hf}, one sees that the integrand is bounded and equal to zero at time $\tau$. Hence,}
Q_tf(x)&= \exp\left(\frac{j_{1,\alpha}^2}{2}t\right)\left(f(x) + \int_0^{t} \frac{1}{h_1(x)} \E^x\left[h_1(Y_{s\wedge \tau})\left(\mathcal{L}f(Y_{s\wedge \tau})-\frac{j_{1,\alpha}^2}{2}f(Y_{s\wedge \tau})\right) \right]ds\right) \\
&= \exp\left(\frac{j_{1,\alpha}^2}{2}t\right)\left(f(x) + \int_0^{t} \left(Q_s\mathcal{L}f(x)-\frac{j_{1,\alpha}^2}{2}Q_sf(x)\right)ds\right)
\end{align*}
from which we get
\[\lim_{t\to 0} \frac{Q_tf(x) - f(x)}{t} = \mathcal{L}f(x),\]
as desired.
\par 
Between Proposition \ref{prop:transition_probabilities} and our work thus far in this proof, we have shown that $X^{(n)}_t$ converges in distribution to $X_t$. Extending this to convergence in the sense of finite dimensional distributions is relatively straightforward and essentially amounts to showing $X^{(n)}$ becomes Markovian in the limit. Indeed, let $t_0 = 0 < t_1 < t_2 < \cdots < t_k = t$. Then
\begin{align*}
\mathbb{P}^x(X_t^{(n)} \in dy |X_{t_i}^{(n)} = y_i \text{ for } i \le k-1) &= \mathbb{P}^x(Y_t \in dy| Y_{t_i} = y_{i} \text{ for } i \le k-1, \tau > n)\\
&= \frac{\mathbb{P}^x(Y_t \in dy, \tau > n| Y_{t_i} = y_{i} \text{ for } i \le k-1, \tau > t_{k-1})}{\mathbb{P}^x(\tau > n| Y_{t_i} = y_{i} \text{ for } i \le k-1, \tau > t_{k-1})}
\intertext{Applying the Strong Markov property,}
\mathbb{P}^x(X_t^{(n)} \in dy |X_{t_i}^{(n)} = y_i, \text{ for } i \le k-1) &= \frac{\mathbb{P}^{y_{k-1}}(Y_{t-t_{k-1}} \in dy, \tau > n-t_{k-1})}{\mathbb{P}^{y_{k-1}}(\tau > n-t_{k-1})} \\
&= \mathbb{P}^{y_{k-1}}(Y_{t-t_{k-1}} \in dy | \tau > n - t_{k-1}) \\
&\to Q_{t-t_{k-1}}(y_{k-1},y).
\end{align*}
from which convergence in the sense of finite dimensional distribution follows.
\end{proof}

\subsection*{Acknowledgements} The author would like to thank Alexei Borodin for helpful feedback and bringing the similarity between Ferrari-Spohn diffusions and \cite{GK} to his attention. The author was supported by the NSF Graduate Research fellowship under grant $\#$1745302.

\printbibliography
\end{document}